\tikzstyle{v} = [circle, draw, inner sep=1pt, minimum size=3pt, fill=black]
\theoremstyle{plain}
\newtheorem{thm}{Theorem}[section]
\newtheorem{prop}[thm]{Proposition}
\newtheorem{lem}[thm]{Lemma}
\newtheorem{cor}[thm]{Corollary}
\theoremstyle{definition}
\newtheorem{df}[thm]{Definition}
\newtheorem{rem}[thm]{Remark}
\newtheorem{ex}[thm]{Example}
\newtheorem{prob}[thm]{Problem}
\newcommand{\NN}{\mathbb{N}}
\newcommand{\Hom}{\mathrm{Hom}}
\newcommand{\wHom}{\mathrm{Hom}^{\mathrm{w}}}
\newcommand{\Homw}{\widehat{\mathrm{Hom}}}
\newcommand{\Sur}{\widehat{\mathrm{Sur}}}
\begin{document}

\title[Universal graph series]
{Universal graph series and vertex-weighted version of chromatic symmetric function}

\author[Yosuke Sato]{YOSUKE SATO}
\address
{
School of Fundamental Science and Engineering, 
Waseda University, Tokyo 169-8555, Japan
}
\email{y.sato.248@ruri.waseda.jp}  

\date{\today}

\begin{abstract}
We focus on two specific generalizations of the chromatic symmetric function: one involving universal graphs and the other concerning vertex-weighted graphs. In this paper, we introduce a unified generalization that incorporates both approaches and demonstrate that the resulting new invariants inherit characteristics from each, particularly the properties of complete invariants. Additionally, we construct complete invariants for directed acyclic graphs (DAGs) and partially ordered sets (posets). As a corollary, these invariants can distinguish hyperplane arrangements that are distinguishable by their intersection posets.
\end{abstract}

\keywords{Vertex-weighted graphs, universal graphs, chromatic symmetric functions.}

\maketitle

\section{Introduction}

Let $G=(V(G),E(G))$ be a simple graph with vertex set $V(G)$ and edge set $E(G)$. A function $\kappa:V(G)\to\mathbb{N}$ is called a proper coloring if, for any edges $\{u,v\}\in E(G)$, $\kappa(u)\ne\kappa(v)$.

\begin{df}[\cite{Sta95}]\label{df1.1}
   Let $x=\{x_1,x_2,\cdots\}$ be countably indeterminates. The chromatic symmetric function of $G$ is defined as
  \[
  X(G)=X(G,x)=\sum_{\kappa}\prod_{v\in V(G)}x_{\kappa(v)},
  \]
  where $\kappa$ runs all proper colorings.
\end{df}

\begin{df}
  Let $G$ and $H$ be simple graphs. A map $\varphi:V(G)\to V(H)$ is called a homomorphism if, for any edge $\{u,v\}\in E(G)$, $\{\varphi(u),\varphi(v)\}$ are edges in $E(H)$. $\Hom(G,H)$ denotes a set of homomorphisms from $G$ to $H$.
\end{df}

$\Hom(G,K_\mathbb{N})$ is a set consisting of all proper colorings of $G$ with colors in $\mathbb{N}$, where $K_{\mathbb{N}}$ denotes the infinite complete graph on $\mathbb{N}$. Hence the sum in Definition \ref{df1.1} is taken over $\Hom(G,K_\mathbb{N})$:
\[
X(G)=\sum_{\varphi\in\Hom(G,K_{\mathbb{N}})}\prod_{v\in V(G)}x_{\varphi(v)}.
\]

The chromatic symmetric function $X(G)$ has the following expansion, known as the power sum expansion.

\begin{thm}
\[
X(G)=\sum_{S\subseteq E}(-1)^{|S|}p_{\lambda_S}
\]
where, for each subset $S\subseteq E$, $\lambda_S$ denotes the partition of $|V(G)|$ given by the vertex counts in each connected component. For a partition $\lambda=(\lambda_1,\cdots,\lambda_l)$, we define
\[
p_{\lambda}=p_{\lambda_1}\cdots p_{\lambda_l}.
\]
\end{thm}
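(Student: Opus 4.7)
The plan is to derive the formula by inclusion–exclusion over the set of ``bad'' edges of an arbitrary (not necessarily proper) coloring. Concretely, for a coloring $\kappa:V(G)\to\NN$, let $B(\kappa)=\{\{u,v\}\in E(G):\kappa(u)=\kappa(v)\}$ be the set of monochromatic edges, so that $\kappa$ is proper if and only if $B(\kappa)=\emptyset$. Encoding this indicator via
\[
[B(\kappa)=\emptyset]=\sum_{S\subseteq B(\kappa)}(-1)^{|S|}
\]
and substituting into the definition of $X(G)$, I would interchange the order of summation to obtain
\[
X(G)=\sum_{S\subseteq E(G)}(-1)^{|S|}\sum_{\kappa:\ S\subseteq B(\kappa)}\prod_{v\in V(G)}x_{\kappa(v)}.
\]

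Next I would identify the inner sum. The condition $S\subseteq B(\kappa)$ says that $\kappa(u)=\kappa(v)$ for every $\{u,v\}\in S$, which is equivalent to $\kappa$ being constant on each connected component of the spanning subgraph $(V(G),S)$. If these components have sizes $\lambda_1,\ldots,\lambda_l$ (so that $\lambda_S=(\lambda_1,\ldots,\lambda_l)$ as a partition of $|V(G)|$), then choosing a color independently for each component factors the sum as
\[
\sum_{\kappa:\ S\subseteq B(\kappa)}\prod_{v\in V(G)}x_{\kappa(v)}=\prod_{i=1}^{l}\Bigl(\sum_{j\in\NN}x_j^{\lambda_i}\Bigr)=\prod_{i=1}^{l}p_{\lambda_i}=p_{\lambda_S}.
\]
Plugging this back yields the claimed identity.

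The argument is essentially mechanical once the right bookkeeping is in place; I do not expect a genuine obstacle. The only step that requires a little care is the swap of summations, since the outer sum over colorings $\kappa$ is infinite — formally the identity lives in the ring of formal power series, and one should note that for each fixed monomial $\prod_v x_{\kappa(v)}$ only finitely many $S$ contribute, so the rearrangement is legitimate. Everything else (the inclusion–exclusion identity and the factorization into power sums $p_{\lambda_i}=\sum_j x_j^{\lambda_i}$) is standard.
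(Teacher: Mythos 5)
Your proof is correct and follows the same route the paper itself uses for the more general statement: the paper proves Proposition \ref{prop:weak_expansion} by exactly this sign-cancellation over subsets of the ``bad'' edge set $E_\varphi$ (your $B(\kappa)$), and Lemma \ref{lem:weak-p_lambda} identifies the inner sum with $p_{\lambda}$ just as you do, your argument being the specialization $H=K_{\mathbb{N}}$, $k=1$, $w\equiv 1$. Your remark on the legitimacy of interchanging the summations in the formal power series ring is a reasonable extra precaution.
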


\section{Background}

\subsection{Universal graph series and chromatic symmetric function}

The concept of the universal graph was introduced by Rado in \cite{Rado} and generalized to the universal graph series in \cite{MMNST}.

\begin{df}[\cite{MMNST},\cite{Rado}]
  (1) Let $G$ be an infinite graph. $G$ is called universal if it contains every finite graph as an induced subgraph.

  (2) Let $H=\{H_n\}_{n\in N}\ (N\subset\mathbb{N})$ be a family of graphs. $H$ is called a universal graph series if, for any simple graph $G$, there exists $n\in N$ such that $H_n$ includes $G$ as an induced subgraph. Furthermore, the universal graph series $\{H_n\}_{n\in\mathbb{N}}$ is referred to as induced universal if, for any pair $m,n\in\mathbb{N}$, $H_m$ is an induced subgraph of $H_n$.
\end{df}

The family of Kneser graphs $\{K_{\mathbb{N},k}\}_{k\in\mathbb{N}}$ is induced universal \cite{HPW}. The Kneser graph $K_{\mathbb{N},k}$ has $\binom{\mathbb{N}}{k}$ as its vertex set and for any vertices $u$ and $v$ in $\binom{\mathbb{N}}{k}$, $u$ and $v$ are adjacent if $u \cap v = \emptyset$.

\begin{df}
  Let $G$ be a simple graph and $H$ be a graph. The \textit{$H$-chromatic function} is defined as follows:
  \[
  X_H(G)=X_H(G,x)=\sum_{\varphi\in\Hom(G,H)}\prod_{v\in V(G)}x_{\varphi(v)},
  \]
  where $x=\{x_u\}_{u\in V(H)}$ and $x_u$ are indeterminates. 
\end{df}

The complete invariants of finite graphs were constructed using universal graphs.

\begin{thm}[\cite{MMNST}]\label{completeness}
  (1) Let $H$ be a universal graph. Then
  \[
  X_H(\bullet)
  \]
  is a complete invariant for finite graphs.

  (2) Let $H=\{H_n\}_{n\in\mathbb{N}}$ be a universal graph series. Then
  \[
  \{X_{H_n}(\bullet)\}_{n\in\mathbb{N}}
  \]
  is a complete invariant for finite graphs.
\end{thm}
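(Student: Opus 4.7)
The strategy is to recover $G$ from $X_H(G)$ by isolating the squarefree part of the polynomial in the variables $\{x_u\}_{u\in V(H)}$. The integer $n = |V(G)|$ can be read off as the common total degree of every monomial in $X_H(G)$. The key observation is that a homomorphism $\varphi\in\Hom(G,H)$ contributes a squarefree monomial if and only if $\varphi$ is injective, in which case the monomial equals $\prod_{u\in\varphi(V(G))}x_u$. Consequently, for every $n$-element subset $T\subseteq V(H)$, the coefficient of $\prod_{u\in T}x_u$ in $X_H(G)$ equals the number of bijective graph homomorphisms from $G$ to the induced subgraph $H[T]$.

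The main step is to exploit universality of $H$: choose $T\subseteq V(H)$ with $H[T]\cong G$. I claim the corresponding coefficient then equals $|\mathrm{Aut}(G)|$, and in particular is strictly positive. Indeed, any bijection $\varphi:V(G)\to T$ satisfying $\varphi(E(G))\subseteq E(H[T])$ must satisfy equality, because $|E(G)| = |E(H[T])|$; hence $\varphi$ is an isomorphism $G\to H[T]$. Now suppose $G'$ is another finite graph with $X_H(G')=X_H(G)$. The coefficient of $\prod_{u\in T}x_u$ in $X_H(G')$ is then also positive, producing at least one bijective homomorphism $G'\to H[T]\cong G$. Applying universality again to find $T'\subseteq V(H)$ with $H[T']\cong G'$ and arguing symmetrically gives a bijective homomorphism $G\to G'$. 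Then $|E(G)|\le|E(G')|$ and $|E(G')|\le|E(G)|$ force equality of edge-set cardinalities, which upgrades either of these bijections to an isomorphism, yielding $G\cong G'$.

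Part (2) is a one-step adaptation of this argument. Given $X_{H_n}(G)=X_{H_n}(G')$ for every $n$, the universal series property supplies some $n_0$ with $G$ an induced subgraph of $H_{n_0}$; applying the squarefree-coefficient argument to $X_{H_{n_0}}$ yields a bijective homomorphism $G'\to G$. Choosing $n_1$ such that $G'$ is induced in $H_{n_1}$ and running the argument on $X_{H_{n_1}}$ produces the reverse map, and the same edge-counting pigeonhole closes the proof. The only technical point requiring care---though entirely routine---is the clean verification that no non-injective homomorphism can contribute to a squarefree monomial of $X_H(G)$; once that is in place, the completeness claim follows entirely from universality and a finite-edge pigeonhole, with no need for the power-sum or any other symmetric-function machinery.
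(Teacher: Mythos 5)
Your argument is correct, but it takes a genuinely different route from the one in the paper. The paper imports this statement from \cite{MMNST} without proof, and its own template for such completeness results is the proof of Theorem \ref{main_thm}: one applies an evaluation operator $\Gamma_{(F,w)}$ to the chromatic function to recover the homomorphism counts $|\Hom(G_i,F)|$ (resp.\ weight-homomorphism counts) for \emph{every} finite target $F$, and then invokes a Lov\'asz-type counting lemma (Lemma \ref{lem:wt-hom}, an inclusion--exclusion over surjections) to conclude $G_1\cong G_2$. You instead extract a single, well-chosen coefficient: the squarefree monomial $\prod_{u\in T}x_u$ supported on an induced copy $H[T]\cong G$, whose coefficient you correctly identify as $|\mathrm{Aut}(G)|$ (the edge-count pigeonhole forcing any bijective homomorphism onto $H[T]$ to be an isomorphism is the right observation), and then close the loop with the symmetric choice $T'$ for $G'$ and a second edge-counting step. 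All the steps check out, including the implicit use of equal total degree to force $|V(G')|=|V(G)|$ and the fact that universality guarantees $\Hom(G,H)\neq\emptyset$. What each approach buys: yours is shorter, entirely self-contained, and avoids the surjection recursion; the paper's is the one that transports to the vertex-weighted generalization in Section 3, where squarefreeness no longer detects injectivity (a weight-$2$ vertex and two identified weight-$1$ vertices produce the same exponent), so the full homomorphism-counting machinery is genuinely needed there even though it is overkill for the unweighted statement.
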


Specifically, $X_{K_{\mathbb{N},k}}(G)$ is called the $k$-\textit{multiple chromatic function}. When $k=1$, this reduces to a chromatic symmetric function. $k$-multiple chromatic function has a generalized power sum expansion:

\begin{thm}[\cite{MMNST}, Theorem 1.9]\label{k-multiple_powersum}
\begin{align*}
X_{K_{\mathbb{N},k}}(G) = \sum_{S \subseteq E(G)}(-1)^{|S|}\sum_{\lambda \in \mathcal{A}_{G_S}^{(k)}}p_{\lambda}.
\end{align*}
$\mathcal{A}_{G_S}^{(k)}$ and $p_\lambda$ are defined in Section \ref{sec:Kneser}.
\end{thm}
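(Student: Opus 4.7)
The plan is to mirror the inclusion–exclusion proof of the classical power sum expansion of $X(G)$, with disjointness of $k$-subsets playing the role of distinctness of colors. First, I would rewrite the $k$-multiple chromatic function as a sum over \emph{all} maps $\varphi\colon V(G)\to\binom{\mathbb{N}}{k}$, weighted by the properness indicator:
\[
X_{K_{\mathbb{N},k}}(G)
= \sum_{\varphi}\Bigl(\prod_{v\in V(G)} x_{\varphi(v)}\Bigr)
\prod_{\{u,v\}\in E(G)}\bigl[\varphi(u)\cap\varphi(v)=\emptyset\bigr],
\]
using that $\varphi\in\Hom(G,K_{\mathbb{N},k})$ if and only if every edge maps to a pair of disjoint $k$-subsets.

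Second, I would expand $\bigl[\varphi(u)\cap\varphi(v)=\emptyset\bigr]=1-\bigl[\varphi(u)\cap\varphi(v)\ne\emptyset\bigr]$ edge by edge and collect terms according to the set $S$ of edges on which the negative option is chosen. This yields
\[
X_{K_{\mathbb{N},k}}(G)
= \sum_{S\subseteq E(G)}(-1)^{|S|}
\sum_{\varphi}\Bigl(\prod_{v}x_{\varphi(v)}\Bigr)
\prod_{\{u,v\}\in S}\bigl[\varphi(u)\cap\varphi(v)\ne\emptyset\bigr].
\]
For each fixed $S$, the remaining constraints are local to the edges of $G_S=(V(G),S)$, so the inner sum factors over the connected components of $G_S$ and on each component reduces to enumerating assignments of $k$-subsets whose $S$-adjacent vertices share at least one element.

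The third and principal step is to identify this inner sum with $\sum_{\lambda\in\mathcal{A}_{G_S}^{(k)}}p_{\lambda}$, the identity recorded in Section \ref{sec:Kneser}. Component by component, I would classify the admissible configurations $\varphi|_C$ by the multiset of distinct $k$-subsets used together with the multiplicities with which those subsets appear; each resulting isomorphism class contributes one power sum $p_{\mu}$ in the indeterminates $\{x_u\}_{u\in\binom{\mathbb{N}}{k}}$, and the aggregate over components yields exactly the partitions collected in $\mathcal{A}_{G_S}^{(k)}$. The main obstacle is precisely this last combinatorial step: for $k\ge 2$, $S$-adjacent vertices need only share an index rather than agree outright, so the single-constant-color argument that works for $k=1$ (which forces each component to contribute the single power sum $p_{|C|}$, recovering $p_{\lambda_S}$) must be replaced by a genuine enumeration of overlap patterns, and checking that these repackage cleanly into the multiset $\mathcal{A}_{G_S}^{(k)}$ is where the content of the theorem lives.
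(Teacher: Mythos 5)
Your proposal follows essentially the same route as the paper: your edge-by-edge expansion of the disjointness indicator is precisely Proposition \ref{prop:weak_expansion} (the inner sum you obtain for each $S$ is $W_{\overline{K_{\mathbb{N},k}}}(G_S)$, the weak-homomorphism generating function into the complement of the Kneser graph), and your third step is exactly Lemma \ref{lem:weak-p_lambda}, which identifies that generating function with $\sum_{\lambda\in\mathcal{A}_{G_S}^{(k)}}p_\lambda$ component by component via $m_\lambda=p_\lambda$ for connected $\lambda$. The argument is correct, and you have rightly located the real combinatorial content in that final identification.
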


\subsection{Extension of the chromatic symmetric function to the vertex-weighted graphs.
}

  A vertex-weighted graph $(G,w)$ is composed of a graph $G$ and a weight function $w:V(G)\to\mathbb{N}$.

\begin{df}[\cite{CS}]
  Let $(G,w)$ be a vertex-weighted graph and let $x=\{x_1,x_2,\cdots\}$ be countably indeterminates. Crew and Spirkl extended the chromatic symmetric function to vertex-weighted graphs as follows:
  \[
  X(G,w)=X((G,w),x)=\sum_{\varphi\in\Hom(G,K_\mathbb{N})}\prod_{v\in V(G)}x_{\varphi(v)}^{w(v)}.
  \]
  If all weights are $1$, $X(G,w)=X(G)$.
\end{df}

Let $G$ be a graph, and let $e = \{v_1, v_2\} \in E(G)$ be any edge. The graph $G \backslash e$ represents the graph obtained by removing the edge $e$ from $G$. The graph $G / e$ represents the graph obtained by removing the edge $e$ and identifying the two vertices $v_1$ and $v_2$ as a single vertex. The modified weight function $w/e$ on the contracted graph $G/e$ is defined as follows: if $e$ is a loop, then $w/e$ remains the same as $w$; otherwise, it is given by
\[
(w/e)(u) =
\begin{cases}
    w(u), & \text{if } u \ne v_1, v_2, \\
    w(v_1) + w(v_2), & \text{if } u \text{ is the contracted vertex in\ }G/e.
\end{cases}
\]
If a multiedge arises due to contraction, it may be replaced by a single edge, since this operation does not affect the chromatic symmetric function.

$X(G,w)$ satisfies the deletion-contraction relation.

\begin{prop}[\cite{CS}, Lemma 2]
    \[
    X(G,w)=X(G\backslash e,w)-X(G/e,w/e).
    \]
\end{prop}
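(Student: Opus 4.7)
The plan is to partition the proper colorings of $G\backslash e$ according to whether the two endpoints of $e=\{v_1,v_2\}$ receive the same color, and match each class with a term on the right-hand side.

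First I would assume $e$ is not a loop. Observe that $\Hom(G\backslash e, K_{\mathbb{N}})$ contains $\Hom(G, K_{\mathbb{N}})$ as the subset of colorings with $\varphi(v_1)\ne\varphi(v_2)$, since requiring the edge $e$ to be properly colored is exactly what is added when passing from $G\backslash e$ to $G$. Write
\[
X(G\backslash e,w)=\sum_{\substack{\varphi\in\Hom(G\backslash e,K_{\mathbb{N}})\\ \varphi(v_1)\ne\varphi(v_2)}}\prod_{v\in V(G)}x_{\varphi(v)}^{w(v)}+\sum_{\substack{\varphi\in\Hom(G\backslash e,K_{\mathbb{N}})\\ \varphi(v_1)=\varphi(v_2)}}\prod_{v\in V(G)}x_{\varphi(v)}^{w(v)}.
\]
The first sum is precisely $X(G,w)$ by the identification above.

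Next I would build a bijection between the colorings in the second sum and $\Hom(G/e, K_{\mathbb{N}})$. Given $\varphi\in\Hom(G\backslash e,K_{\mathbb{N}})$ with $\varphi(v_1)=\varphi(v_2)$, define $\widetilde{\varphi}$ on $V(G/e)$ to agree with $\varphi$ on unaffected vertices and to send the contracted vertex $v^*$ to the common color $\varphi(v_1)=\varphi(v_2)$. The properness of $\widetilde{\varphi}$ on $G/e$ follows from properness of $\varphi$ on $G\backslash e$ (any edge of $G/e$ incident to $v^*$ comes from an edge of $G\backslash e$ incident to $v_1$ or $v_2$), and the inverse construction is obvious, so this is a bijection. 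For the weight bookkeeping, note that on all vertices $u\ne v_1,v_2$ one has $(w/e)(u)=w(u)$, while at $v^*$
\[
x_{\widetilde{\varphi}(v^*)}^{(w/e)(v^*)}=x_{\varphi(v_1)}^{w(v_1)+w(v_2)}=x_{\varphi(v_1)}^{w(v_1)}x_{\varphi(v_2)}^{w(v_2)},
\]
using $\varphi(v_1)=\varphi(v_2)$. Hence the second sum equals $X(G/e,w/e)$, yielding $X(G\backslash e,w)=X(G,w)+X(G/e,w/e)$, which rearranges to the claimed identity.

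Finally I would dispose of the loop case. If $e$ is a loop at $v_1$, then any $\varphi\in\Hom(G,K_{\mathbb{N}})$ would need $\varphi(v_1)\ne\varphi(v_1)$, so $\Hom(G,K_{\mathbb{N}})=\emptyset$ and $X(G,w)=0$. On the other hand, $G/e=G\backslash e$ and by definition $w/e=w$ in this case, so $X(G\backslash e,w)-X(G/e,w/e)=0=X(G,w)$.

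There is no real obstacle here: the only thing to be careful about is the weight calculation at the contracted vertex, where the exponent $w(v_1)+w(v_2)$ must be repackaged as the product $x_{\varphi(v_1)}^{w(v_1)}x_{\varphi(v_2)}^{w(v_2)}$ using the equality $\varphi(v_1)=\varphi(v_2)$; once this is observed, the rest is a routine bijective argument.
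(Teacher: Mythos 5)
Your proof is correct, and the paper itself gives no proof of this proposition --- it is quoted from Crew and Spirkl's Lemma 2 --- but your argument (splitting the proper colorings of $G\backslash e$ by whether $\varphi(v_1)=\varphi(v_2)$, identifying the first class with $\Hom(G,K_{\mathbb{N}})$ and bijecting the second with $\Hom(G/e,K_{\mathbb{N}})$ while the definition of $w/e$ makes the monomials match) is exactly the standard one used in that source. The loop case and the weight bookkeeping at the contracted vertex are both handled correctly.
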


$X(G,w)$ also has a power sum expansion:

\begin{thm}[\cite{CS}, Lemma 3]
    \[
    X(G,w)=\sum_{S\subseteq E(G)}(-1)^{|S|}p_{\lambda(G,w,S)},
    \]
    where $\lambda(G,w,S)$ denotes the partition consisting of total weights of the connected components of $(G_{S},w)$, with $G_{S}=(V(G),S)$.
\end{thm}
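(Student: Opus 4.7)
The plan is to lift the classical inclusion--exclusion proof of Stanley's power sum expansion to the vertex-weighted setting, tracking the weights in the exponents. First I would rewrite $X(G,w)$ as a sum over \emph{all} maps $\varphi: V(G) \to \mathbb{N}$, enforcing properness through an edge indicator:
\[
X(G,w) = \sum_{\varphi: V(G)\to\mathbb{N}} \prod_{v\in V(G)} x_{\varphi(v)}^{w(v)} \prod_{\{u,v\}\in E(G)} \bigl(1 - \mathbb{1}[\varphi(u)=\varphi(v)]\bigr).
\]
Expanding the product over edges and interchanging the two summations converts this into
\[
X(G,w) = \sum_{S\subseteq E(G)} (-1)^{|S|} \sum_{\substack{\varphi: V(G)\to\mathbb{N}\\ \varphi(u)=\varphi(v)\ \forall\{u,v\}\in S}} \prod_{v\in V(G)} x_{\varphi(v)}^{w(v)}.
\]

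For a fixed $S$, the constraint forces $\varphi$ to be constant on every connected component of $G_S = (V(G),S)$. If those components are $C_1,\ldots,C_\ell$ and $\lambda_i := \sum_{v\in C_i} w(v)$, then (reordered) the $\lambda_i$ are exactly the parts of $\lambda(G,w,S)$, and the inner sum factors as
\[
\prod_{i=1}^{\ell} \sum_{c\in\mathbb{N}} x_c^{\lambda_i} = \prod_{i=1}^\ell p_{\lambda_i} = p_{\lambda(G,w,S)}.
\]
Substituting back gives the claimed formula.

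The only real content is this factorization step: the exponent on the common color $c$ of a component $C_i$ is $\sum_{v\in C_i} w(v)$, which is why weighted components yield the single power sum $p_{\lambda_i}$ rather than a product $\prod_{v\in C_i} p_{w(v)}$ as in the unweighted argument. Beyond this observation the argument is routine bookkeeping, so I expect no serious obstacle. As a sanity check, an alternative inductive proof using the deletion--contraction relation stated immediately before the theorem works just as cleanly: subsets $S\not\ni e$ contribute $X(G\setminus e,w)$ by induction, while the bijection $S \leftrightarrow S\setminus\{e\}$ between $\{S\subseteq E(G):e\in S\}$ and subsets of $E(G/e)$ preserves $\lambda$ (contracting $e$ and including $e$ in $S$ produce the same components with the same total weights), supplying the $-X(G/e,w/e)$ term. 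Either route is essentially the unweighted proof with weights carried along.
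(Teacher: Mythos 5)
Your argument is correct and is essentially the same as the paper's: the paper only cites this statement from \cite{CS} without reproving it, but its proof of the $K_{\mathbb{N},k}$ generalization (Proposition \ref{prop:weak_expansion} combined with Lemma \ref{lem:weak-p_lambda}, specialized to $k=1$) is exactly your inclusion--exclusion over edge subsets, with the inner sum over maps constant on the components of $G_S$ factoring into $p_{\lambda(G,w,S)}$. One small caveat on your deletion--contraction aside only: the bijection $S\leftrightarrow S\setminus\{e\}$ with subsets of $E(G/e)$ requires $G/e$ to retain parallel edges (so that $|E(G/e)|=|E(G)|-1$), consistent with the paper's remark that merging multiedges afterwards does not change $X$.
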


\section{Extending $X_H(G)$ to Vertex-Weighted Graphs}

\begin{df}
  Let $(G,w)$ be a vertex-weighted graph and $H$ be a graph. We extend the $H$-chromatic function to vertex-weighted graphs as follows:
  \[
  X_H(G,w)=X_H((G,w),x)=\sum_{\varphi\in\Hom(G,H)}\prod_{v\in V(G)}x_{\varphi(v)}^{w(v)},
  \]
  where $x_u$ $(u\in V(H))$ are indeterminates. We adopt the notation
  \[
  x_{\varphi}(G,w):=x_{\varphi}((G,w),H,x):=\prod_{v\in V(G)}x_{\varphi(v)}^{w(v)},
  \]
  where $\varphi\in\Hom(G,H)$.
\end{df}

We have complete invariants for vertex-weighted graphs in a manner similar to Theorem \ref{completeness}.

\begin{thm}\label{main_thm}
  (1) Let $H$ be a universal graph. Then
  \[
  X_H(\bullet)
  \]
  is a complete invariant for finite vertex-weighted graphs.

  (2) Let $H=\{H_n\}_{n\in\mathbb{N}}$ be a universal graph series. Then
  \[
  \{X_{H_n}(\bullet)\}_{n\in\mathbb{N}}
  \]
  is a complete invariant for finite vertex-weighted graphs.
\end{thm}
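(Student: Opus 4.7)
The idea is to recover $(G,w)$ from $X_H(G,w)$ by mimicking the strategy behind Theorem \ref{completeness}: isolate the injective homomorphisms $G \to H$, single out those whose image spans an induced copy of $G$, and then read the weights off the corresponding monomials. Concretely, I would first set $n := |V(G)|$; since $H$ is universal we may induced-embed $G$ into $H$, so there exists at least one injective $\varphi \in \Hom(G,H)$, and its monomial $x_\varphi(G,w) = \prod_v x_{\varphi(v)}^{w(v)}$ has support (the set of variables with positive exponent) of size $n$. No homomorphism produces a larger support, so $n$ is recovered as the maximum support size of a monomial appearing in $X_H(G,w)$.

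For each $U \subseteq V(H)$ with $|U| = n$, I would collect the sub-series
\[
M_U(x) = \sum_{\substack{\varphi \in \Hom(G,H) \\ \varphi(V(G)) = U}} x_\varphi(G,w),
\]
whose summands are parametrized by bijective homomorphisms $\sigma : V(G) \to U$ viewed as maps into $H$, equivalently by bijective homomorphisms from $G$ to the induced subgraph $H[U]$ of $H$ on $U$. The existence of such a $\sigma$ forces $G$ to be a spanning subgraph of $H[U]$, so $M_U \ne 0$ gives $|E(G)| \leq |E(H[U])|$. The crucial observation --- and the step I expect to require the most care --- is that equality $|E(G)| = |E(H[U])|$ forces any bijective homomorphism $\sigma : G \to H[U]$ to be an isomorphism, because $\sigma$ injects $E(G)$ into $E(H[U])$ and the cardinalities match. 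Universality of $H$ guarantees some $U$ with $H[U] \cong G$, so minimizing the nonnegative integer $|E(H[U])|$ over the nonempty set $\{U : M_U \ne 0\}$ both recovers $|E(G)|$ and singles out $U^\ast$ for which $H[U^\ast] \cong G$.

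Once such a $U^\ast$ is identified, every summand of $M_{U^\ast}$ has the form $\prod_{u \in U^\ast} x_u^{w(\sigma^{-1}(u))}$ for some isomorphism $\sigma : G \to H[U^\ast]$. Reading the exponents of any one such monomial yields a weight function $a : U^\ast \to \NN$ with $(H[U^\ast], a) \cong (G, w)$ as vertex-weighted graphs, completing part (1). For part (2), I would pick $n$ large enough that $G$ induced-embeds into $H_n$ --- which occurs eventually in any universal graph series --- and apply part (1) to $X_{H_n}(G,w)$; since the entire collection $\{X_{H_n}(G,w)\}_n$ is known, locating such an $n$ and concluding presents no further difficulty.
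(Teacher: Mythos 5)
Your argument is correct in substance but takes a genuinely different route from the paper. The paper does not attempt to reconstruct $(G,w)$ directly from the series; instead it generalizes the Lov\'asz homomorphism-counting criterion to vertex-weighted graphs (Lemma \ref{lem:wt-hom}: if $\big|\Homw\big((G_1,w_1),(F,w)\big)\big|=\big|\Homw\big((G_2,w_2),(F,w)\big)\big|$ for every finite $(F,w)$, then the weighted graphs are isomorphic, proved by an inclusion--exclusion/induction on surjective weight-homomorphisms), and then shows that each count $\big|\Homw\big((G,w'),(F,w)\big)\big|$ can be read off from $X_H(G,w')$ by embedding $F$ into $H$ and applying a specialization map $\Gamma_{(F,w)}$ to the monomials. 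That route yields the stronger intermediate fact that $X_H$ determines all weight-homomorphism counts, at the cost of the auxiliary lemma. Your route is more elementary and self-contained: you locate the injective homomorphisms as the monomials of maximal support, observe that a bijective homomorphism onto $H[U]$ with $|E(G)|=|E(H[U])|$ is forced to be an isomorphism, and use minimization of $|E(H[U])|$ over $\{U: M_U\neq 0\}$ to isolate the induced copies of $G$, from which the weights are read off as exponents. All of these steps check out (note that no cancellation can occur since every monomial appears with coefficient $+1$, and that weights are positive so the support of $x_\varphi(G,w)$ is exactly $\varphi(V(G))$); you end up proving the existential form of weighted isomorphism, which is what completeness requires.

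The one place you are too quick is part (2). A universal graph series only guarantees that for each finite graph \emph{some} $H_n$ contains it as an induced subgraph; nothing ``occurs eventually'' in a monotone sense, and more importantly your argument needs a \emph{single} index $n$ for which both $G_1$ and $G_2$ induced-embed in $H_n$ (otherwise the minimizing $U^*$ extracted from the common series $X_{H_n}(G_1,w_1)=X_{H_n}(G_2,w_2)$ is only known to be an induced copy of $G_1$, and the bijective homomorphism from $G_2$ onto $H_n[U^*]$ cannot be promoted to an isomorphism without knowing $|E(G_2)|=|E(G_1)|$). This is easily repaired --- apply the definition of universal graph series to the disjoint union $G_1\sqcup G_2$ to obtain a common $n$, or run your minimization symmetrically at an index good for $G_1$ and an index good for $G_2$ to pin down $|V|$ and $|E|$ --- but as written the step is a gap rather than a routine remark.
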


To prove this theorem, we introduce weight-homomorphisms for vertex-weighted graphs.

\begin{df}\label{wt:hom}
  Let $(G_1,w_1)$ and $(G_2,w_2)$ be simple vertex-weighted graphs. We call a map $\psi:V(G_1)\to V(G_2)$ a weight-homomorphism if $\psi$ satisfies the following conditions.
  
  $(1)$ $\psi$ is a homomorphism.
  
  $(2)$ For any $v\in V(G_2)$, $\displaystyle\sum_{u\in\psi^{-1}(v)}w_1(u)\le w_2(v).$

  We write $\Homw\big((G_1,w_1),(G_2,w_2)\big)$ for a set of weight-homomorphisms from $(G_1,w_1)$ to $(G_2,w_2)$. 
\end{df}

\begin{df}\label{wt:cong}
  Let $(G_1,w_1)$ and $(G_2,w_2)$ be vertex-weighted graphs. We say $(G_1,w_1)$ and $(G_2,w_2)$ are isomorphic if they satisfy the following two conditions:
  
  $(1)$ $G_1$ and $G_2$ are isomorphic. Namely, there exists a bijection $\varphi:V(G_1)\to V(G_2)$ such that for any two vertices $u,v\in V(G_1)$, $\{u,v\}\in E(G_1)$ if and only if $\{\varphi(u),\varphi(v)\}\in E(G_2).$ Such a map is called an isomorphism.
  
  $(2)$ For any isomorphism $\varphi:V(G_1)\to V(G_2)$, $w_2\circ\varphi=w_1$.
\vskip\baselineskip
$(G_1,w_1)\cong(G_2,w_2)$ denotes that $(G_1,w_1)$ and $(G_2,w_2)$ are isomorphic.
\end{df}

We extend the necessary and sufficient conditions for graph isomorphism (\cite{GR}, {\rm p.128 Exercise 11}) to vertex-weighted graphs.

\begin{lem}\label{lem:wt-hom}
  Let $(G_1,w_1)$ and $(G_2,w_2)$ be vertex-weighted graphs. If $\big|\Homw\big((G_1,w_1),(F,w)\big)\big|=\big|\Homw\big((G_2,w_2),(F,w)\big)\big|$ for any finite vertex-weighted graph $(F,w)$, then $(G_1,w_1)\cong(G_2,w_2)$.
\end{lem}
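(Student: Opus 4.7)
My plan is to adapt Lov\'asz's classical ``number-of-homomorphisms'' argument to the weighted setting. The key step is to pass from weight-homomorphism counts to \emph{surjective} weight-homomorphism counts via M\"obius inversion over induced sub-targets, and then test against $(G_2,w_2)$ itself to extract a weight-preserving isomorphism.

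First, introduce $\Sur((G,w),(F,w'))\subseteq\Homw((G,w),(F,w'))$ to be the subset of weight-homomorphisms $\psi$ with $\psi(V(G))=V(F)$. Every $\psi\in\Homw((G,w),(F,w'))$ factors uniquely through its image $T:=\psi(V(G))\subseteq V(F)$ as a surjective weight-homomorphism onto the induced vertex-weighted subgraph $(F[T],w'|_T)$, yielding the identity
\[
\bigl|\Homw((G,w),(F,w'))\bigr|=\sum_{T\subseteq V(F)}\bigl|\Sur((G,w),(F[T],w'|_T))\bigr|.
\]
Standard M\"obius inversion on the Boolean lattice $2^{V(F)}$ then gives
\[
\bigl|\Sur((G,w),(F,w'))\bigr|=\sum_{S\subseteq V(F)}(-1)^{|S|}\bigl|\Homw((G,w),(F[V(F)\setminus S],w'|_{V(F)\setminus S}))\bigr|,
\]
so the hypothesis of the lemma transfers verbatim to $\Sur$: $\bigl|\Sur((G_1,w_1),(F,w'))\bigr|=\bigl|\Sur((G_2,w_2),(F,w'))\bigr|$ for every finite vertex-weighted graph $(F,w')$.

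Second, specialise to $(F,w')=(G_2,w_2)$. The identity map lies in $\Sur((G_2,w_2),(G_2,w_2))$, so the equality just obtained forces $\Sur((G_1,w_1),(G_2,w_2))\ne\emptyset$; pick $\psi:(G_1,w_1)\to(G_2,w_2)$ in it. Surjectivity together with the weight-inequality gives
\[
\sum_{v\in V(G_2)}w_2(v)\;\geq\;\sum_{v\in V(G_2)}\sum_{u\in\psi^{-1}(v)}w_1(u)\;=\;\sum_{u\in V(G_1)}w_1(u),
\]
and the symmetric argument (swapping $G_1$ and $G_2$, using the identity in $\Sur((G_1,w_1),(G_1,w_1))$) forces the two total weights to coincide, so every local inequality $\sum_{u\in\psi^{-1}(v)}w_1(u)\leq w_2(v)$ is in fact an equality.

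Third, to upgrade $\psi$ to a bona-fide isomorphism, I test the hypothesis against $(K_n,W\cdot\mathbf{1})$ with $W$ exceeding the common total weight: the weight-inequality becomes vacuous, so $|\Homw|$ reduces to the chromatic polynomial $P(G_i,n)$, and comparing coefficients gives $|V(G_1)|=|V(G_2)|$ and $|E(G_1)|=|E(G_2)|$. A surjection between equal-size finite vertex sets is a bijection, so $\psi$ is vertex-bijective and the saturated equalities read $w_1=w_2\circ\psi$. Since $\psi$ is a vertex-bijective graph homomorphism, $\psi(E(G_1))\subseteq E(G_2)$ has cardinality $|E(G_1)|=|E(G_2)|$ and therefore exhausts $E(G_2)$, making $\psi$ a graph isomorphism that transports $w_1$ onto $w_2$. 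Hence $(G_1,w_1)\cong(G_2,w_2)$. The step I expect to require the most care is the factorisation identity underlying the M\"obius inversion: one must verify that restricting $w'$ to $T=\psi(V(G))$ preserves the weight-inequality, and conversely that any surjective weight-homomorphism onto $(F[T],w'|_T)$ extends to a weight-homomorphism into $(F,w')$ (vertices in $V(F)\setminus T$ have empty preimage, so the inequality is trivially satisfied there). Once this bookkeeping is settled, the rest is alternating-sum combinatorics and elementary size-counting.
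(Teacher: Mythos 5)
Your proof is correct, and its first half coincides in substance with the paper's: the paper derives the equality of surjective weight-homomorphism counts by induction on $|V(F)|$ using exactly the decomposition $|\Homw(\,\cdot\,,(F,w))|=\sum_{T}|\Sur(\,\cdot\,,(F[T],w|_T))|$, which is your M\"obius inversion written recursively, and your explicit check that restricting to the image and extending from an induced sub-target both preserve the weight inequality is precisely the bookkeeping that makes this legitimate. The endgame differs. The paper reads off $|V(G_1)|=|V(G_2)|$ directly from the existence of surjections in both directions (no chromatic polynomial needed) and then asserts that the resulting bijective weight-homomorphism is an isomorphism --- a step that genuinely needs justification, since a vertex-bijective homomorphism need not reflect edges. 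Your specialization to $(K_n,W\cdot\mathbf{1})$ supplies exactly the missing ingredient, $|E(G_1)|=|E(G_2)|$, and your edge count then legitimately upgrades $\psi$ to a graph isomorphism (alternatively one could compose the two surjections and use that a bijective endomorphism of a finite graph is an automorphism). For the weights, your total-weight saturation argument yielding $w_1=w_2\circ\psi$ is cleaner and more direct than the paper's separate induction on a minimal-weight vertex. One caveat: Definition \ref{wt:cong} in the paper literally requires $w_2\circ\varphi=w_1$ for \emph{every} isomorphism $\varphi$ of the underlying graphs, whereas you produce only one weight-compatible isomorphism; but the literal definition cannot be intended (it fails to be reflexive whenever $G$ has an automorphism not preserving $w$), so under the natural existential reading your argument proves exactly the stated lemma.
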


\begin{proof}
  Let $(G_1,w_1)$ and $(G_2,w_2)$ be vertex-weighted graphs. Assume that $\big|\Homw\big((G_1,w_1),(F,w)\big)\big|=\big|\Homw\big((G_2,w_2),(F,w)\big)\big|$. First, we show that
  \[
  \big|\Sur\big((G_1,w_1),(F,w)\big)\big|=\big|\Sur\big((G_2,w_2),(F,w)\big)\big|
  \]
  by induction on $|V(F)|$, where $\Sur\big((X,w_X),(Y,w_Y)\big)$ denotes a set of surjective weight-homomorphism from $(X,w_X)$ to $(Y,w_Y)$. When $|V(F)|=1$, the equality clearly holds because any mapping to a single-vertex graph is surjective. Assume that the equality holds for $|V(F)|=n$. We then show that it holds for $|V(F)|=n+1$.
  \begin{align*}
     \big|\Sur\big((G_1,w_1),(F,w)\big)\big|&=|\Homw\big((G_1,w_1),(F,w)\big)\big|-\sum_{W\subsetneq F}|\Sur\big((G_1,w_1),(W,w)\big)\big|\\
     &=|\Homw\big((G_2,w_2),(F,w)\big)\big|-\sum_{W\subsetneq F}|\Sur\big((G_2,w_2),(W,w)\big)\big|\\
     &=\big|\Sur\big((G_2,w_2),(F,w)\big)\big|
  \end{align*}
  Here, we use the lemma's hypothesis for the first term and the induction hypothesis for the second term. Therefore, the equality is established.

  Now, set $(F,w)=(G_1,w_1)$, which gives us the following inequality:
\[
\big|\Sur\big((G_2,w_2),(G_1,w_1)\big)\big| = \big|\Sur\big((G_1,w_1),(G_1,w_1)\big)\big| > 0.
\]
From this, we deduce that $|V(G_1)| \leq |V(G_2)|$.  Additionally, for any $\psi_2\in\Sur\big((G_2,w_2),(G_2,w_2)\big)$, 
  \[
 \sum_{v'\in\psi_2^{-1}(v)}w_2(v')\le w_1(v).\tag{$*$}
  \]

Next, by setting $(F,w)=(G_2,w_2)$, we similarly obtain $|V(G_2)| \leq |V(G_1)|$ and for any $\psi_1\in\Sur\big((G_1,w_1),(G_2,w_2)\big)$,
  \[
  \sum_{v\in\psi_1^{-1}(v')}w_1(v)\le w_2(v').\tag{$**$}
  \]

Given that both inequalities $|V(G_1)| \leq |V(G_2)|$ and $|V(G_2)| \leq |V(G_1)|$ hold, we conclude that $|V(G_1)| = |V(G_2)|$. Thus, the number of vertices in both graphs must be equal. Hence, $\psi_1\in\Sur\big((G_1,w_1),(G_2,w_2)\big)$ must be injective, thus an isomorphism. Therefore, condition (1) of Definition \ref{wt:cong} is satisfied.

Let $\varphi:V(G_1)\to V(G_2)$ be an arbitrary isomorphism between $G_1$ and $G_2$. We will show that $w_2\circ\varphi=w_1$ by induction on $n=|V(G_1)|=|V(G_2)|$. When $n=1$, by $(*)$ and $(**)$, $(w_2\circ\varphi)(u)=w_1(u)\ (u\in V(G_1))$. Assume that $w_2\circ\varphi=w_1$ holds for $n$. Consider $|V(G_1)|=|V(G_2)|=n+1$. Let $v_m$ be one of the vertices in $V(G_1)$ with the smallest weight. If $w_2(\varphi(v_m))>w_1(v_m)$, it contradicts $(*)$. Reversing the direction of the inequality similarly contradicts $(**)$, so $w_2(\varphi(v_m))=w_1(v_m)$. Define the restricted weight functions $w_1\backslash{v_m}:=w_1|_{G_1\backslash v_m}$ and $w_2\backslash{\varphi(v_m)}:=w_2|_{G_2\backslash\varphi(v_m)}$. Since $|(V(G_1\backslash v_m))|=|(V(G_2\backslash\varphi(v_m))|=n$, by the induction hypothesis, $(w_2\backslash\varphi(v_m))\circ\varphi=w_1\backslash v_m$. Combining this with $(w_2\circ\varphi)(v_m)=w_1(v_m)$, we obtain $w_2\circ\varphi=w_1$. Therefore, the lemma is proved.
\end{proof}

Now we give the proof of Theorem \ref{main_thm}.

\begin{proof}[Proof of Theorem \ref{main_thm}]
  We provide a proof of (1) only, as (2) follows similarly. Suppose that $X_H(G_1,w_1,x)=X_H(G_2,w_2,x)$, where $x = {x_u}{u \in V(H)}$ denotes a set of indeterminates. Let $(F,w)$ be a finite vertex-weighted graph. Since $H$ is a universal graph, $F$ can be embedded as an induced subgraph of $H$. 

Define
\[
m(F,w):=m((F,w),x):=\prod_{v\in V(F)}x_v^{w(v)}.
\]
For each \(x_v\), define a map \(\gamma_F\) by
\[
\gamma_F(x_v) := \begin{cases}
    1, & \text{If}\ v \in V(F), \\
    0, & \text{If}\ v \notin V(F).
\end{cases}
\]
Next, for any monomial $M(x)$, define the map $\Gamma_{(F,w)}$ by
\[
\Gamma_{(F,w)}(M(x)):=\begin{cases}
M(\gamma_F(x)), & \text{If } m(F,w)/M(x)\in\mathbb{Z}[x],\\
0, & \text{otherwise}.
\end{cases}
\]
Then, we have
\[
\big|\Homw\big((G_i, w_i), (F, w)\big)\big| = \sum_{\varphi\in\Hom(G_i,H)}\Gamma_{(F,w)}\big(x_{\varphi}(G_i,w_i)\big).\quad(i=1,2)
\]
This holds because if $m(F, w)/x_{\varphi}(G_i,w_i)\in\mathbb{Z}[x]$, $\varphi$ is a weight-homomorphism from $(G,w)$ to the subgraph of $(F,w)$ and $\Gamma_{(F,w)}(x_{\varphi}(G_i,w_i))=1$.

Therefore, from the first assumption,

\begin{align*}
\big|\Homw\big((G_1,w_1),(F,w)\big)\big| &= \sum_{\varphi\in\Hom(G_1,H)} \Gamma_{(F,w)}\big(x_\varphi(G_1,w_1)\big) \\
&= \sum_{\varphi\in\Hom(G_2,H)} \Gamma_{(F,w)}\big(x_\varphi(G_2,w_2)\big)\\
&= \big|\Homw\big((G_2,w_2),(F,w)\big)\big|.
\end{align*}
Thus, by the lemma \ref{lem:wt-hom}, we conclude that \((G_1, w_1) \cong (G_2, w_2)\).
\end{proof}

\section{Power sum expansion of $X_{K_{\mathbb{N},k}}(G,w)$}\label{sec:Kneser}

To obtain the power sum expansion, we introduce weak homomorphisms and $\mathcal{P}^{(k)}$.

\begin{df}[\cite{MMNST}]
Let $G$ and $H$ be graphs. A mapping $\varphi: V(G) \to V(H)$ is called a weak homomorphism if, for every edge $\{u, v\} \in E(G)$, either $\{\varphi(u), \varphi(v)\} \in E(H)$ or $\varphi(u) = \varphi(v)$ holds. We denote a set of all weak homomorphisms from $G$ to $H$ by $\wHom(G, H)$.
\end{df}

We extend the definition of $W_H(G)$ from \cite{MMNST} to vertex-weighted graphs as follows:
\[
W_H(G, w) = W_H((G, w), x) = \sum_{\varphi \in \wHom(G, H)} \prod_{v \in V(G)} x_{\varphi(v)}^{w(v)}.
\]

\begin{prop}[\cite{MMNST}]\label{prop:weak_expansion}
For a subset $S \subseteq E(G)$, let $G_S$ denote the spanning subgraph of $G$. Then, the following holds:
\[
X_H(G, w) = \sum_{S \subseteq E(G)} (-1)^{|S|} W_{\overline{H}}(G_S, w),
\]
where $\overline{H}$ represents the complement of $H$.
\end{prop}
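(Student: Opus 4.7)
The plan is to argue by inclusion–exclusion on the edges of $G$, mimicking the unweighted argument from \cite{MMNST}. The weight factor $\prod_{v} x_{\varphi(v)}^{w(v)}$ only depends on the map $\varphi : V(G) \to V(H)$, not on which edges we are currently considering, so the weighted case reduces to the same combinatorial identity with a decorated monomial pulled outside.

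The first step is to expand the right-hand side according to the definition of $W_{\overline{H}}(G_S, w)$ and exchange the order of summation, writing
\[
\sum_{S \subseteq E(G)} (-1)^{|S|} W_{\overline{H}}(G_S, w) = \sum_{\varphi : V(G) \to V(H)} \Big( \prod_{v \in V(G)} x_{\varphi(v)}^{w(v)} \Big) \sum_{S} (-1)^{|S|},
\]
where the inner sum runs over those $S \subseteq E(G)$ for which $\varphi \in \wHom(G_S, \overline{H})$. For each $\varphi$, set
\[
T(\varphi) := \bigl\{ \{u,v\} \in E(G) : \{\varphi(u),\varphi(v)\} \notin E(H) \bigr\}.
\]
Since $H$ is simple, an edge $\{u,v\}$ satisfies the weak-homomorphism condition for $\overline{H}$ (i.e.\ $\varphi(u) = \varphi(v)$ or $\{\varphi(u),\varphi(v)\} \in E(\overline{H})$) exactly when $\{u,v\} \in T(\varphi)$. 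Hence $\varphi \in \wHom(G_S, \overline{H})$ is equivalent to $S \subseteq T(\varphi)$.

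Second, apply the standard identity $\sum_{S \subseteq T}(-1)^{|S|} = [T = \varnothing]$ to the inner sum, which collapses the expression to
\[
\sum_{\substack{\varphi : V(G) \to V(H) \\ T(\varphi) = \varnothing}} \prod_{v \in V(G)} x_{\varphi(v)}^{w(v)}.
\]
The condition $T(\varphi) = \varnothing$ says every edge of $G$ is sent to an edge of $H$, i.e.\ $\varphi \in \Hom(G, H)$, so this sum is exactly $X_H(G, w)$.

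There is no genuine obstacle; the only thing that requires a moment of care is checking that the "weak" condition into $\overline{H}$ really matches membership in $T(\varphi)$, which uses that $H$ has no loops so $\varphi(u) = \varphi(v)$ implies $\{\varphi(u),\varphi(v)\} \notin E(H)$. Once that identification is made, the weighted monomial $\prod_v x_{\varphi(v)}^{w(v)}$ is inert under the inclusion–exclusion and the argument goes through verbatim from the unweighted case.
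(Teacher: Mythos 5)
Your proposal is correct and follows essentially the same route as the paper: expand $W_{\overline{H}}(G_S,w)$, swap the order of summation, identify the set of edges on which $\varphi$ can be weakly extended (your $T(\varphi)$ is the paper's $E_\varphi$), and collapse the inner alternating sum to the indicator of $T(\varphi)=\varnothing$. Your remark that the identification uses $H$ being simple (loopless) is the same observation the paper makes implicitly when rewriting $E_\varphi$.
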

\begin{proof}
This result follows by replacing $G$ with $(G,w)$ in the proof of Proposition 3.1 in \cite{MMNST}.
\begin{align*}
    \sum_{S \subseteq E(G)} (-1)^{|S|} W_{\overline{H}}(G_S, w) &= \sum_{S \subseteq E(G)} (-1)^{|S|} \sum_{\varphi \in \wHom(G_S, \overline{H})} \prod_{v \in V(G)} x_{\varphi(v)}^{w(v)}\\
    &= \sum_{\varphi : V(G) \to V(H) } \sum_{S \subseteq E_\varphi} (-1)^{|S|} \prod_{v \in V(G)} x_{\varphi(v)}^{w(v)},
\end{align*}
where
\begin{align*}
    E_\varphi &= \{\{u,v\} \in V(G) \mid \varphi(u)=\varphi(v)\ \text{or}\ \{\varphi(u),\varphi(v)\} \in E(\overline{H})\}\\
    &= \{\{u,v\} \in E(G) \mid \{\varphi(u),\varphi(v)\} \notin E(H)\}.
\end{align*}
Since
\[
  \sum_{S\subseteq E_\varphi}(-1)^{|S|}=\begin{cases}
    1,&\text{if}\ E_\varphi=\emptyset,\\
    0,&\text{otherwise}.
  \end{cases}
\]
It follows that
\[
  \sum_{\varphi : V(G) \to V(H)} \sum_{S \subseteq E_\varphi} (-1)^{|S|} \prod_{v \in V(G)} x_{\varphi(v)}^{w(v)} = X_H(G, w).
\]
\end{proof}

In \cite{MMNST}, Miezaki et al. introduced an equivalence relation $\sim$ on the set of $n$-element multisets composed of elements from $\binom{\mathbb{N}}{k}$. For two multisets $\{I_1, \cdots, I_n\}$ and $\{J_1, \cdots, J_n\}$, we define $\{I_1, \cdots, I_n\} \sim \{J_1, \cdots, J_n\}$ if there exists a permutation $\sigma \in S_{\mathbb{N}}$ such that, as multisets,
\[
\{I_1, \cdots, I_n\} = \{\sigma(J_1), \cdots, \sigma(J_n)\}.
\]
\begin{df}[\cite{MMNST}]
We denote by $\mathcal{P}_n^{(k)}$ the set of equivalence classes. Furthermore, we define $\mathcal{P}^{(k)} := \bigsqcup_{n=1}^\infty \mathcal{P}_n^{(k)}$.
\end{df}

Let $\lambda \in \mathcal{P}_n^{(k)}$ and assume $\lambda \sim \{I_1, \cdots, I_n\}$. Then, $\lambda$ is viewed as a $k$-regular hyper-multigraph, where the vertex set is $V_\lambda = I_1 \cup \cdots \cup I_n$ and the edge set is $E_\lambda = \{I_1, \cdots, I_n\}$. In particular, $\mathcal{P}^{(2)}$ corresponds to the set of all multigraphs without loops or isolated vertices.

For $\lambda \in \mathcal{P}_n^{(k)}$, the monomial $k$-fold symmetric function $m_\lambda = m_\lambda^{(k)}$ is defined as follows:
\[
m_\lambda := \sum_{\{I_1, \cdots, I_n\} \in \lambda} x_{I_1} \cdots x_{I_n}.
\]
If the hypergraph $\lambda$ is connected, we say that $\lambda \in \mathcal{P}^{(k)}$ is connected. Otherwise, $\lambda$ can be decomposed as $\lambda = \lambda_1 \sqcup \cdots \sqcup \lambda_l$. In this case, we define $p_\lambda = p_\lambda^{(k)}$ by:
\[
p_\lambda := m_{\lambda_1} \cdots m_{\lambda_l}.
\]
This corresponds to the classical case of power sum symmetric functions when $k=1$.

\begin{df}[\cite{MMNST}]
Let $G$ be a connected graph with $n$ vertices. We say that $\lambda \in \mathcal{P}_n^{(k)}$ is admissible by $G$ if there exists a bijection $\varphi: V(G) \to E_\lambda$ such that $\{u, v\} \in E(G) \iff \varphi(u) \cap \varphi(v) \neq \emptyset$. The set of elements in $\mathcal{P}_n^{(k)}$ that are admissible by $G$ is denoted by $\mathcal{A}_G^{(k)}$.

If $G$ is disconnected and $G = G_1 \sqcup \cdots \sqcup G_l$, then $\mathcal{A}_G^{(k)} := \mathcal{A}_{G_1}^{(k)} \times \cdots \times \mathcal{A}_{G_l}^{(k)}$.
\end{df}

With this, we obtain the following generalization of the power sum expansion.

\begin{thm}[Restatement of Theorem \ref{k-multiple_powersum}, \cite{MMNST}]
\[
X_{K_{\mathbb{N},k}}(G) = \sum_{S \subseteq E(G)} (-1)^{|S|} \sum_{\lambda \in \mathcal{A}_{G_S}^{(k)}} p_\lambda.
\]
\end{thm}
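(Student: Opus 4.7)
Since this is a restatement of the MMNST result, one could simply invoke \cite{MMNST}, but a self-contained proof using the machinery set up in this paper is natural: the plan is to apply Proposition \ref{prop:weak_expansion} with $H = K_{\mathbb{N},k}$ and $w \equiv 1$ to obtain
\[
X_{K_{\mathbb{N},k}}(G) \;=\; \sum_{S\subseteq E(G)}(-1)^{|S|}\, W_{\overline{K_{\mathbb{N},k}}}(G_S),
\]
and then show that for every (spanning) subgraph $F$,
\[
W_{\overline{K_{\mathbb{N},k}}}(F) \;=\; \sum_{\lambda \in \mathcal{A}_F^{(k)}} p_\lambda. \tag{$\star$}
\]
Substituting $(\star)$ with $F = G_S$ into the alternating sum yields the theorem.

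To establish $(\star)$, I would first reduce to the connected case. If $F = F_1 \sqcup \cdots \sqcup F_l$, then a weak homomorphism $F \to \overline{K_{\mathbb{N},k}}$ is exactly a tuple of independent weak homomorphisms on each $F_i$, so $W_{\overline{K_{\mathbb{N},k}}}(F) = \prod_i W_{\overline{K_{\mathbb{N},k}}}(F_i)$. Matching this against the product definitions $\mathcal{A}_F^{(k)} = \prod_i \mathcal{A}_{F_i}^{(k)}$ and $p_\lambda = \prod_i m_{\lambda_i}$ reduces $(\star)$ to the identity $W_{\overline{K_{\mathbb{N},k}}}(F) = \sum_{\lambda \in \mathcal{A}_F^{(k)}} m_\lambda$ for connected $F$.

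For connected $F$ with $n$ vertices, I would set up a correspondence between weak homomorphisms $\varphi : V(F) \to \binom{\mathbb{N}}{k}$ and labelings by admissible hypermultigraphs. Unpacking the definition: $\varphi$ is a weak homomorphism to $\overline{K_{\mathbb{N},k}}$ exactly when $\{u,v\}\in E(F)$ forces $\varphi(u)\cap\varphi(v)\neq\emptyset$ (since $\varphi(u)=\varphi(v)$ already gives nonempty intersection). The image multiset $\{\varphi(v):v\in V(F)\}$ defines a hypermultigraph whose $\mathbb{N}$-relabeling class $\lambda$ lies in $\mathcal{P}_n^{(k)}$, and $\varphi$ itself supplies the admissibility bijection $V(F)\to E_\lambda$, so $\lambda \in \mathcal{A}_F^{(k)}$. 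Conversely, each representative multiset $\{I_1,\ldots,I_n\}$ of a class $\lambda \in \mathcal{A}_F^{(k)}$, together with an admissibility bijection, produces a weak homomorphism contributing the monomial $x_{I_1}\cdots x_{I_n}$, which is exactly the monomial summed in $m_\lambda$.

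The main obstacle will be bookkeeping the multiplicities: one weak homomorphism may share its image monomial with many others (e.g.\ when several vertices of $F$ are assigned the same $k$-subset), and one multiset representative may admit several distinct admissibility bijections with respect to $F$. I would handle this by showing that the number of weak homomorphisms $\varphi$ producing a given representative $\{I_1,\ldots,I_n\}$ of $\lambda$ equals the number of admissibility bijections, so that the total $W$-sum collects $\prod_i x_{I_i}$ with the same coefficient as in $\sum_{\lambda} m_\lambda$. This automorphism/labeling bookkeeping, rather than anything deeper, is the one calculation that has to be executed carefully; everything else in the argument is a direct application of Proposition \ref{prop:weak_expansion} and the multiplicativity of $W_{\overline{K_{\mathbb{N},k}}}$ and of $p_\lambda$ over disjoint unions.
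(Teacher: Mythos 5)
Your overall route is exactly the paper's: the statement is the $w\equiv 1$ specialization of the weighted machinery, namely Proposition \ref{prop:weak_expansion} followed by your identity $(\star)$, which is precisely Lemma \ref{lem:weak-p_lambda} with all weights equal to $1$; the reduction of $(\star)$ to connected components via multiplicativity of $W_{\overline{K_{\mathbb{N},k}}}$ and of $p_\lambda$ over disjoint unions is also how the paper argues. So there is no divergence in strategy, and the one step you only sketch --- the connected case of $(\star)$ --- is also the one step the paper does not prove (it is asserted in a single line and deferred to Lemma 4.4 of \cite{MMNST}).

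That sketch, however, contains a genuine gap. You claim that for a weak homomorphism $\varphi\colon V(F)\to\binom{\mathbb{N}}{k}$ into $\overline{K_{\mathbb{N},k}}$, ``$\varphi$ itself supplies the admissibility bijection, so $\lambda\in\mathcal{A}_F^{(k)}$.'' This does not follow from the definitions as stated: a weak homomorphism only forces $\varphi(u)\cap\varphi(v)\neq\emptyset$ for \emph{edges} $\{u,v\}\in E(F)$, whereas admissibility requires the biconditional $\{u,v\}\in E(F)\iff\varphi(u)\cap\varphi(v)\neq\emptyset$, which also constrains \emph{non-edges}. For instance, the constant map on a connected, non-complete $F$ is a weak homomorphism into $\overline{K_{\mathbb{N},k}}$, yet its image class (a single $k$-set with multiplicity $|V(F)|$) violates the biconditional at every non-edge. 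The same example defeats your proposed bookkeeping principle --- ``the number of weak homomorphisms producing a given representative equals the number of admissibility bijections'' --- since here there is one weak homomorphism and, under the literal definition, zero admissibility bijections. Closing this requires either a careful reconciliation of the weak-homomorphism condition with the precise conventions of \cite{MMNST} for $\mathcal{A}^{(k)}$ and $m_\lambda$ (note that the paper's own worked example places such ``collapsed'' classes in $\mathcal{A}^{(k)}$, suggesting the operative admissibility condition is only the forward implication, in which case the multiplicity matching still has to be verified rather than asserted), or simply citing \cite{MMNST} for the connected case, as the paper effectively does.
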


Now, we extend this to vertex-weighted graphs.

\begin{df}
Let $(G, w)$ be a connected vertex-weighted graph with $n$ vertices, and let the total weight be $N=\sum_{v\in V(G)}w(v)$. We say that $\lambda \in \mathcal{P}_N^{(k)}$ is admissible by $(G, w)$ if there exists a bijection $\varphi: V(G) \to E_\lambda$ such that $\{u, v\} \in E(G) \iff \varphi(u) \cap \varphi(v) \neq \emptyset$ and the multiplicity of $\varphi(v)\in E_\lambda$ equals $w(v)$. The set of elements in $\mathcal{P}_N^{(k)}$ that are admissible by $(G, w)$ is denoted by $\mathcal{A}_{(G, w)}^{(k)}$.

If $G$ is disconnected and the vertex-weighted graph $(G, w)$ decomposes as $(G, w) = (G_1, w_1) \sqcup \cdots \sqcup (G_l, w_l)$, then 
\[
\mathcal{A}_{(G, w)}^{(k)} = \mathcal{A}_{(G_1, w_1)}^{(k)} \times \cdots \times \mathcal{A}_{(G_l, w_l)}^{(k)}.
\]
\end{df}

\begin{lem}\label{lem:weak-p_lambda}
For $\lambda = (\lambda_1, \cdots, \lambda_l)$, define $p_\lambda = p_{\lambda_1} \cdots p_{\lambda_l}$. Then,
\[
W_{\overline{K_{\mathbb{N}, k}}}(G, w) = \sum_{\lambda \in \mathcal{A}_{(G, w)}^{(k)}} p_\lambda.
\]
\end{lem}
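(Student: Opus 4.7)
The plan is to extend the argument of \cite{MMNST} (used to prove Theorem \ref{k-multiple_powersum}) to the vertex-weighted setting by systematically replacing $x_{\varphi(v)}$ with $x_{\varphi(v)}^{w(v)}$ and the image $n$-element multiset with the weighted $N$-element one, where $N = \sum_v w(v)$. Concretely, to each weak homomorphism $\varphi \in \wHom(G, \overline{K_{\mathbb{N}, k}})$ I associate the multiset
\[
M(\varphi) := \{\varphi(v) \text{ with multiplicity } w(v) : v \in V(G)\},
\]
whose $S_\mathbb{N}$-equivalence class $[M(\varphi)]$ is an element of $\mathcal{P}_N^{(k)}$, and I partition the sum defining $W_{\overline{K_{\mathbb{N}, k}}}(G, w)$ according to this class. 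Both sides factor over the connected components of $(G, w)$---$\wHom$ of a disjoint union is a product and the monomial $\prod_v x_{\varphi(v)}^{w(v)}$ splits, while $\mathcal{A}^{(k)}_{(G, w)}$ and $p_\lambda = m_{\lambda_1} \cdots m_{\lambda_l}$ are multiplicative by definition---so I may restrict attention to connected $(G, w)$.

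For connected $(G, w)$, first check that $[M(\varphi)]$ always lies in $\mathcal{A}^{(k)}_{(G, w)}$ and is connected as a hyper-multigraph. Connectedness is immediate: adjacent vertices have intersecting $\varphi$-images by the weak-homomorphism condition, and this propagates along any path in $G$. Admissibility is witnessed by $\varphi$ itself, viewed as a map from $V(G)$ onto the distinct $k$-subsets of $M(\varphi)$: adjacency in $G$ corresponds to nonempty intersection of images, and the multiplicity of each distinct $k$-subset in $M(\varphi)$ is, by construction, the total $w$-weight of its $\varphi$-preimage, which is exactly the multiplicity condition required by $\mathcal{A}^{(k)}_{(G, w)}$. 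This yields
\[
W_{\overline{K_{\mathbb{N}, k}}}(G, w) = \sum_{\lambda \in \mathcal{A}^{(k)}_{(G, w)}} \ \sum_{\substack{\varphi \in \wHom(G, \overline{K_{\mathbb{N}, k}}) \\ [M(\varphi)] = \lambda}} \prod_{v \in V(G)} x_{\varphi(v)}^{w(v)}.
\]

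It then remains to prove the inner identity
\[
\sum_{\varphi\,:\,[M(\varphi)] = \lambda} \prod_{v} x_{\varphi(v)}^{w(v)} = m_\lambda.
\]
The idea is that each representative multiset $\{I_1, \ldots, I_N\} \sim \lambda$ is realized by exactly one weak homomorphism $\varphi$: using the admissibility witness that identifies the distinct edges of $\lambda$ with the vertices of $G$, one distributes the $I_j$ among the vertices according to the prescribed weight-multiplicity matching, and the resulting $\varphi$ contributes the monomial $x_{I_1} \cdots x_{I_N}$. Summing over representatives of $\lambda$ then reproduces $m_\lambda$. I expect the main obstacle to be the bookkeeping in this reconstruction---in the unweighted case the correspondence between representative multisets and weak homomorphisms is nearly tautological, but the weights introduce repeated copies of the same $k$-subset, so one must carefully track how these copies are distributed among vertices of $G$ using the multiplicity condition and the $S_\mathbb{N}$-equivalence generating $m_\lambda$.
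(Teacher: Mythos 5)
Your proposal follows essentially the same route as the paper: reduce to connected $(G,w)$ using the multiplicativity of $\wHom$, of $\mathcal{A}^{(k)}_{(G,w)}$, and of $p_\lambda$ over connected components, and then identify $\sum_{\varphi\in\wHom(G,\overline{K_{\mathbb{N},k}})}\prod_{v}x_{\varphi(v)}^{w(v)}$ with $\sum_{\lambda\in\mathcal{A}^{(k)}_{(G,w)}}m_\lambda$ by grouping weak homomorphisms according to the $S_{\mathbb{N}}$-class of the weighted image multiset. In fact you give more detail than the paper, whose entire treatment of the connected case is a one-line chain of equalities together with the instruction to rerun the proof of Lemma~4.4 of \cite{MMNST} with $x_{\varphi(v)}$ replaced by $x_{\varphi(v)}^{w(v)}$. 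However, the one step you do not actually carry out is precisely the step the paper delegates to \cite{MMNST}, namely the inner identity $\sum_{[M(\varphi)]=\lambda}\prod_{v}x_{\varphi(v)}^{w(v)}=m_\lambda$, and two things there are not automatic. First, a weak homomorphism into $\overline{K_{\mathbb{N},k}}$ only forces adjacent vertices to have intersecting images; the admissibility condition as stated is a biconditional ($\{u,v\}\in E(G)\iff\varphi(u)\cap\varphi(v)\neq\emptyset$), so your claim that ``admissibility is witnessed by $\varphi$ itself'' needs an argument reconciling the classes that arise from weak homomorphisms with $\mathcal{A}^{(k)}_{(G,w)}$ as defined. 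Second, the assertion that each representative multiset of an admissible $\lambda$ is realized by \emph{exactly one} weak homomorphism is what makes the inner sum equal $m_\lambda$ with coefficient one rather than some symmetry count, and you explicitly defer this as ``bookkeeping.'' Since that count is the substance of the lemma, your text is an accurate plan rather than a finished proof --- though, to be fair, it is no less complete than the paper's own proof, which leaves the same two points to the cited unweighted argument.
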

\begin{proof}
This result follows by replacing $G$ with $(G,w)$ in the proof of Lemma 4.4 in \cite{MMNST}.
If $G$ is connected, we have
\[
W_{\overline{K_{\mathbb{N}, k}}}(G, w) = \sum_{\varphi \in \wHom(G, \overline{K_{\mathbb{N}, k}})} \prod_{v \in V(G)} x_{\varphi(v)}^{w(v)} = \sum_{\lambda \in \mathcal{A}_{(G, w)}^{(k)}} m_\lambda = \sum_{\lambda \in \mathcal{A}_{(G, w)}^{(k)}} p_\lambda.
\]
If $G$ is disconnected and can be decomposed into its connected components $G = G_1 \sqcup \cdots \sqcup G_l$, then, via the natural bijection
\[
\wHom(G, \overline{K_{\mathbb{N}, k}}) \simeq \prod_{i=1}^l \wHom(G_i, \overline{K_{\mathbb{N}, k}}),
\]
we obtain
\[
W_{\overline{K_{\mathbb{N}, k}}}(G, w) = \prod_{i=1}^l W_{\overline{K_{\mathbb{N}, k}}}(G_i, w) = \prod_{i=1}^l \sum_{\lambda_i \in \mathcal{A}_{(G_i, w)}^{(k)}} p_\lambda = \sum_{\lambda \in \mathcal{A}_{(G, w)}^{(k)}} p_\lambda.
\]
\end{proof}

\begin{thm}
\[
X_{K_{\mathbb{N}, k}}(G, w) = \sum_{S \subseteq E} (-1)^{|S|} \sum_{\lambda \in \mathcal{A}_{(G_S, w)}^{(k)}} p_\lambda.
\]
\end{thm}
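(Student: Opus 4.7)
The plan is to derive this theorem as a direct combination of Proposition \ref{prop:weak_expansion} and Lemma \ref{lem:weak-p_lambda}, both of which have already been established for vertex-weighted graphs in the preceding part of this section. In effect, all the combinatorial content has been packaged into those two statements, and the role of the theorem is to record their composition in the specific case $H=K_{\mathbb{N},k}$.

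First I would apply Proposition \ref{prop:weak_expansion} with $H=K_{\mathbb{N},k}$. This converts the chromatic-type sum over proper homomorphisms into an alternating sum over edge subsets of weak-homomorphism generating functions into the complement:
\[
X_{K_{\mathbb{N},k}}(G,w) \;=\; \sum_{S\subseteq E(G)} (-1)^{|S|}\, W_{\overline{K_{\mathbb{N},k}}}(G_S,w).
\]
Next I would invoke Lemma \ref{lem:weak-p_lambda}, applied term-by-term to each spanning subgraph $(G_S,w)$, to replace the weak-homomorphism generating function with the sum over admissible hyper-multigraphs:
\[
W_{\overline{K_{\mathbb{N},k}}}(G_S,w) \;=\; \sum_{\lambda\in\mathcal{A}^{(k)}_{(G_S,w)}} p_\lambda.
\]
Substituting this into the previous display and interchanging summations (both sums are finite after truncation and the $p_\lambda$ are well-defined elements of the formal series ring) yields exactly the stated identity.

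There is no real obstacle: the main work, namely extending the weak-homomorphism decomposition and the identification of $W_{\overline{K_{\mathbb{N},k}}}$ with a sum of $p_\lambda$'s from the graph setting of \cite{MMNST} to the vertex-weighted setting, has already been carried out in Proposition \ref{prop:weak_expansion} and Lemma \ref{lem:weak-p_lambda}. The only minor point worth checking, which I would make explicit in one line, is that the admissibility set $\mathcal{A}^{(k)}_{(G_S,w)}$ depends on $G_S$ through both its edge set and the weight function restricted to $V(G)=V(G_S)$, so substituting $G_S$ for $G$ in Lemma \ref{lem:weak-p_lambda} is legitimate on a subset-by-subset basis. Once this is noted, the proof is essentially a one-line composition.
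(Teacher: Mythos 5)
Your proposal matches the paper's proof exactly: the theorem is obtained by applying Proposition \ref{prop:weak_expansion} with $H=K_{\mathbb{N},k}$ and then substituting Lemma \ref{lem:weak-p_lambda} term-by-term for each spanning subgraph $(G_S,w)$. The paper states this as a one-line composition of the two results, so your argument is both correct and the same route.
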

\begin{proof}
    It follows from Proposition \ref{prop:weak_expansion} and Lemma \ref{lem:weak-p_lambda}.
\end{proof}

\begin{ex}
Consider the  case $k=2$ and $(G,w) = (P_{3},w)$ as shown in Figure 1. Define the weight function $w$ by $w(v_1)=2$, $w(v_2)=w(v_3)=1$.

\begin{figure}[htbp]
\centering

\begin{tikzpicture}[scale=0.4]

\coordinate[label=above:$v_1$, label=below:$2$](v1)at(0,0);
\coordinate[label=above:$v_2$, label=below:$1$](v2)at(5,0);
\coordinate[label=above:$v_3$, label=below:$1$](v3)at(10,0);

\draw[thick](v1)--(v2);
\draw[thick](v2)--(v3);
\foreach\P in{v1,v2,v3}\fill[black](\P)circle(0.25);

\end{tikzpicture}

\caption{}
\end{figure}

Then the spanning subgraphs of $G$ are isomorphic to one of 
\begin{align*}
&S_{1|2|3}:=\begin{tikzpicture}[scale=0.15]
\coordinate[label=above:$v_1$](v1)at(0,0);
\coordinate[label=above:$v_2$](v2)at(5,0);
\coordinate[label=above:$v_3$](v3)at(10,0);
\foreach\P in{v1,v2,v3}\fill[black](\P)circle(0.5);
\end{tikzpicture},\\
&S_{12|3}:=\begin{tikzpicture}[scale=0.15]
\coordinate[label=above:$v_1$](v1)at(0,0);
\coordinate[label=above:$v_2$](v2)at(5,0);
\coordinate[label=above:$v_3$](v3)at(10,0);
\draw[thick](v1)--(v2);
\foreach\P in{v1,v2,v3}\fill[black](\P)circle(0.5);
\end{tikzpicture},\\
&S_{1|23}:=\begin{tikzpicture}[scale=0.15]
\coordinate[label=above:$v_1$](v1)at(0,0);
\coordinate[label=above:$v_2$](v2)at(5,0);
\coordinate[label=above:$v_3$](v3)at(10,0);
\draw[thick](v2)--(v3);
\foreach\P in{v1,v2,v3}\fill[black](\P)circle(0.5);
\end{tikzpicture},\\
&G=\begin{tikzpicture}[scale=0.15]
\coordinate[label=above:$v_1$](v1)at(0,0);
\coordinate[label=above:$v_2$](v2)at(5,0);
\coordinate[label=above:$v_3$](v3)at(10,0);
\draw[thick](v1)--(v2);
\draw[thick](v2)--(v3);
\foreach\P in{v1,v2,v3}\fill[black](\P)circle(0.5);
\end{tikzpicture}.
\end{align*}

Define

\begin{align*}
&G_1:=\begin{tikzpicture}[scale=0.15]
\coordinate[label=above:$v_1$](v1)at(0,0);
\foreach\P in{v1}\fill[black](\P)circle(0.5);
\end{tikzpicture},\quad
G_2:=\begin{tikzpicture}[scale=0.15]
\coordinate[label=above:$v_2$](v2)at(0,0);
\foreach\P in{v2}\fill[black](\P)circle(0.5);
\end{tikzpicture},\quad
G_3:=\begin{tikzpicture}[scale=0.15]
\coordinate[label=above:$v_3$](v3)at(0,0);
\foreach\P in{v3}\fill[black](\P)circle(0.5);
\end{tikzpicture},\\
&G_{12}:=\begin{tikzpicture}[scale=0.15]
\coordinate[label=above:$v_1$](v1)at(0,0);
\coordinate[label=above:$v_2$](v2)at(5,0);
\draw[thick](v1)--(v2);
\foreach\P in{v1,v2}\fill[black](\P)circle(0.5);
\end{tikzpicture},\quad
G_{23}:=\begin{tikzpicture}[scale=0.15]
\coordinate[label=above:$v_2$](v2)at(5,0);
\coordinate[label=above:$v_3$](v3)at(10,0);
\draw[thick](v2)--(v3);
\foreach\P in{v2,v3}\fill[black](\P)circle(0.5);
\end{tikzpicture}.
\end{align*}

For each $i, j \in \{1, 2, 3\}$, let $w_i$ denote the restriction of $w$ to $\{v_i\}$, and $w_{ij}$ denote the restriction of $w$ to $\{v_i, v_j\}$, where $i \ne j$.

Then we obtain
\begin{align*}
\mathcal{A}_{(S_{1|2|3},w)}^{(2)} 
&= \mathcal{A}_{(G_{1},w_1)}^{(2)} \times \mathcal{A}_{(G_{2},w_2)}^{(2)} \times \mathcal{A}_{(G_{3},w_3)}^{(2)} 
= \Set{\left(\begin{tikzpicture}[baseline=2]
\draw (0,0) node[v](1){};
\draw (0,0.4) node[v](2){};
\draw (1) to [bend left] (2);
\draw (1) to [bend right] (2);
\end{tikzpicture} \ \begin{tikzpicture}[baseline=2]
\draw (0,0) node[v](1){};
\draw (0,0.4) node[v](2){};
\draw (1)--(2);
\end{tikzpicture} \ \begin{tikzpicture}[baseline=2]
\draw (0,0) node[v](1){};
\draw (0,0.4) node[v](2){};
\draw (1)--(2);
\end{tikzpicture} \right)},  \\
\mathcal{A}_{(S_{12|3},w)}^{(2)} 
&= \mathcal{A}_{(G_{12},w_{12})}^{(2)} \times \mathcal{A}_{(G_{3},w_3)}^{(2)}
= \Set{\left( \begin{tikzpicture}[baseline=2]
\draw (0,-0.1) node[v](1){};
\draw (0,0.2) node[v](2){};
\draw (0,0.5) node[v](3){};
\draw (1)--(2);
\draw (2) to [bend left] (3);
\draw (2) to [bend right] (3);
\end{tikzpicture} \ \begin{tikzpicture}[baseline=2]
\draw (0,0) node[v](1){};
\draw (0,0.4) node[v](2){};
\draw (1)--(2);
\end{tikzpicture} \right), \left( \begin{tikzpicture}[baseline=2]
\draw (0,-0.1) node[v](1){};
\draw (0,0.5) node[v](2){};
\draw (1)--(2);
\draw (1) to [bend left] (2);
\draw (1) to [bend right] (2);
\end{tikzpicture} \ \begin{tikzpicture}[baseline=2]
\draw (0,0) node[v](1){};
\draw (0,0.4) node[v](2){};
\draw (1)--(2);
\end{tikzpicture} \right)}, \\
\mathcal{A}_{(S_{1|23},w)}^{(2)} 
&= \mathcal{A}_{(G_{1},w_{1})}^{(2)} \times \mathcal{A}_{(G_{23},w_{23})}^{(2)}
= \Set{\left( \begin{tikzpicture}[baseline=2]
\draw (0,0) node[v](1){};
\draw (0,0.4) node[v](2){};
\draw (1) to [bend left] (2);
\draw (1) to [bend right] (2);
\end{tikzpicture} \ \begin{tikzpicture}[baseline=2]
\draw (0,-0.1) node[v](1){};
\draw (0,0.2) node[v](2){};
\draw (0,0.5) node[v](3){};
\draw (1)--(2)--(3);
\end{tikzpicture} \right), \left( \begin{tikzpicture}[baseline=2]
\draw (0,0) node[v](1){};
\draw (0,0.4) node[v](2){};
\draw (1) to [bend left] (2);
\draw (1) to [bend right] (2);
\end{tikzpicture} \ \begin{tikzpicture}[baseline=2]
\draw (0,0) node[v](1){};
\draw (0,0.4) node[v](2){};
\draw (1) to [bend left] (2);
\draw (1) to [bend right] (2);
\end{tikzpicture} \right)}, \\
\mathcal{A}_{(G,w)}^{(2)} 
&= \Set{\begin{tikzpicture}[baseline=3]
\draw (0,-0.2) node[v](1){};
\draw (0,0.1) node[v](2){};
\draw (0,0.4) node[v](3){};
\draw (0,0.7) node[v](4){};
\draw (3) to [bend left] (4);
\draw (3) to [bend right] (4);
\draw (1)--(2)--(3);
\end{tikzpicture}, \ \begin{tikzpicture}[baseline=-5]
\draw (0,-0.1) node[v](1){};
\draw (0,0.3) node[v](2){};
\draw (-0.2,-0.4) node[v](3){};
\draw ( 0.2,-0.4) node[v](4){};
\draw (1) to [bend left] (2);
\draw (1) to [bend right] (2);
\draw (1)--(3);
\draw (1)--(4);
\end{tikzpicture}, \ \begin{tikzpicture}
\draw (0,-0.3) node[v](1){};
\draw (0.4,-0.3) node[v](2){};
\draw (0.2,0) node[v](3){};
\draw (2)--(3)--(1);
\draw (1) to [bend left] (2);
\draw (1) to [bend right] (2);
\end{tikzpicture}, \ \begin{tikzpicture}[baseline=2]
\draw (0,-0.2) node[v](1){};
\draw (0,0.2) node[v](2){};
\draw (0,0.6) node[v](3){};
\draw (1)--(2);
\draw (2)--(3);
\draw (2) to [bend left] (3);
\draw (2) to [bend right] (3);
\end{tikzpicture}, \ \begin{tikzpicture}[baseline=2]
\draw (0,-0.2) node[v](1){};
\draw (0,0.2) node[v](2){};
\draw (0,0.6) node[v](3){};
\draw (1) to [bend left] (2);
\draw (1) to [bend right] (2);
\draw (2) to [bend left] (3);
\draw (2) to [bend right] (3);
\end{tikzpicture}, \ \begin{tikzpicture}[baseline=2]
\draw (0,0) node[v](1){};
\draw (0,0.6) node[v](2){};
\draw (1) to [bend left = 10] (2);
\draw (1) to [bend right = 10] (2);
\draw (1) to [bend left = 30] (2);
\draw (1) to [bend right = 30] (2);
\end{tikzpicture}  }. 
\end{align*}

Therefore 

\begin{align*}
&X_{K_{\mathbb{N},2}}(G,w)\\
&= \sum_{\lambda \in \mathcal{A}_{(S_{1|2|3},w)}^{(2)}}p_{\lambda} -\sum_{\lambda \in \mathcal{A}_{(S_{12|3},w)}^{(2)}}p_{\lambda} - \sum_{\lambda \in \mathcal{A}_{(S_{1|23},w)}^{(2)}}p_{\lambda} + \sum_{\lambda \in \mathcal{A}_{(G,w)}^{(2)}}p_{\lambda} \\
&= p_{\, \begin{tikzpicture}
\draw (0,0) node[v](1){};
\draw (0,0.4) node[v](2){};
\draw (1) to [bend left] (2);
\draw (1) to [bend right] (2);
\end{tikzpicture} \, \begin{tikzpicture}
\draw (0,0) node[v](1){};
\draw (0,0.4) node[v](2){};
\draw (1)--(2);
\end{tikzpicture} \, \begin{tikzpicture}
\draw (0,0) node[v](1){};
\draw (0,0.4) node[v](2){};
\draw (1)--(2);
\end{tikzpicture}}-p_{\, \begin{tikzpicture}[baseline=2]
\draw (0,-0.1) node[v](1){};
\draw (0,0.2) node[v](2){};
\draw (0,0.5) node[v](3){};
\draw (1)--(2);
\draw (2) to [bend left] (3);
\draw (2) to [bend right] (3);
\end{tikzpicture} \, \begin{tikzpicture}[baseline=2]
\draw (0,0) node[v](1){};
\draw (0,0.4) node[v](2){};
\draw (1)--(2);
\end{tikzpicture}}-p_{\, \begin{tikzpicture}[baseline=2]
\draw (0,-0) node[v](1){};
\draw (0,0.4) node[v](2){};
\draw (1)--(2);
\draw (1) to [bend left] (2);
\draw (1) to [bend right] (2);
\end{tikzpicture} \, \begin{tikzpicture}[baseline=2]
\draw (0,0) node[v](1){};
\draw (0,0.4) node[v](2){};
\draw (1)--(2);
\end{tikzpicture}} - p_{\, \begin{tikzpicture}[baseline=2]
\draw (0,-0) node[v](1){};
\draw (0,0.4) node[v](2){};
\draw (1) to [bend left] (2);
\draw (1) to [bend right] (2);
\end{tikzpicture} \, \begin{tikzpicture}[baseline=2]
\draw (0,-0.1) node[v](1){};
\draw (0,0.2) node[v](2){};
\draw (0,0.5) node[v](3){};
\draw (1)--(2);
\draw (2)--(3);
\end{tikzpicture}} - p_{\, \begin{tikzpicture}[baseline=2]
\draw (0,-0) node[v](1){};
\draw (0,0.4) node[v](2){};
\draw (1) to [bend left] (2);
\draw (1) to [bend right] (2);
\end{tikzpicture} \, \begin{tikzpicture}[baseline=2]
\draw (0,-0) node[v](1){};
\draw (0,0.4) node[v](2){};
\draw (1) to [bend left] (2);
\draw (1) to [bend right] (2);
\end{tikzpicture}} + p_{\, \begin{tikzpicture}[baseline=3]
\draw (0,-0.1) node[v](1){};
\draw (0,0.2) node[v](2){};
\draw (0,0.5) node[v](3){};
\draw (0,0.8) node[v](4){};
\draw (1)--(2)--(3);
\draw (3) to [bend left] (4);
\draw (3) to [bend right] (4);
\end{tikzpicture}} + p_{ \begin{tikzpicture}[baseline=-5]
\draw (0,0) node[v](1){};
\draw (0,0.3) node[v](2){};
\draw (-0.15,-0.26) node[v](3){};
\draw ( 0.15,-0.26) node[v](4){};
\draw (1) to [bend left] (2);
\draw (1) to [bend right] (2);
\draw (1)--(3);
\draw (1)--(4);
\end{tikzpicture}} + p_{\begin{tikzpicture}
\draw (0,0) node[v](1){};
\draw (0.3,0) node[v](2){};
\draw (0.15,0.26) node[v](3){};
\draw (2)--(3)--(1);
\draw (1) to [bend left] (2);
\draw (1) to [bend right] (2);
\end{tikzpicture}} + p_{\, \begin{tikzpicture}[baseline=3]
\draw (0,-0.2) node[v](1){};
\draw (0,0.2) node[v](2){};
\draw (0,0.6) node[v](3){};
\draw (1)--(2);
\draw (2)--(3);
\draw (2) to [bend left] (3);
\draw (2) to [bend right] (3);
\end{tikzpicture}} + p_{\, \begin{tikzpicture}[baseline=3]
\draw (0,-0.1) node[v](1){};
\draw (0,0.2) node[v](2){};
\draw (0,0.5) node[v](3){};
\draw (1) to [bend left] (2);
\draw (1) to [bend right] (2);
\draw (2) to [bend left] (3);
\draw (2) to [bend right] (3);
\end{tikzpicture}} + p_{\begin{tikzpicture}[baseline=2]
\draw (0,0) node[v](1){};
\draw (0,0.6) node[v](2){};
\draw (1) to [bend left = 10] (2);
\draw (1) to [bend right = 10] (2);
\draw (1) to [bend left = 30] (2);
\draw (1) to [bend right = 30] (2);
\end{tikzpicture}}. 
\end{align*}
\end{ex}

\section{Construction of complete invariants for DAGs and posets}

Let $\vec{G}$ be a finite directed acyclic graph (DAG). Let $G$ be the undirected graph obtained by ignoring the directions in $\vec{G}$. Define a weight function $w$ on $G$ as follows:
\[
w(v) := \begin{cases}
1, & \text{if the in-degree of } v \text{ is } 0,\\
1 + \underset{u: u \to v}{\max} w(u), & \text{otherwise}.
\end{cases}
\]
Then, the function 
\[
Y_H(\vec{G}, x) := X_H((G, w), x)
\]
is uniquely determined, and the following two statements hold.

\begin{thm}
(1) Let $H$ be a universal graph. Then,
\[
Y_H(\bullet)
\]
is a complete invariant for finite DAGs.

(2) Let $H = \{H_n\}_{n\in\mathbb{N}}$ be an infinite sequence of universal graphs. Then,
\[
\{Y_{H_n}(\bullet)\}_{n \in \mathbb{N}}
\]
is a complete invariant for finite DAGs.
\end{thm}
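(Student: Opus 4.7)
The plan is to reduce the claim to the vertex-weighted result (Theorem \ref{main_thm}) by showing that the DAG $\vec{G}$ is recoverable from the underlying vertex-weighted graph $(G,w)$. The key observation is that $w$ encodes a topological depth: $w(v)-1$ is the length of the longest directed path in $\vec{G}$ terminating at $v$, which is well-defined because $\vec{G}$ is acyclic. In particular, for every directed edge $u\to v$ of $\vec{G}$, the recursive definition gives $w(v)\ge 1+w(u)$, so $w(u)<w(v)$ strictly.

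First I would apply Theorem \ref{main_thm}(1): from $Y_H(\vec{G}_1)=Y_H(\vec{G}_2)$ we obtain $X_H((G_1,w_1))=X_H((G_2,w_2))$ by the very definition of $Y_H$, and hence $(G_1,w_1)\cong(G_2,w_2)$ as vertex-weighted graphs. By Definition \ref{wt:cong}, this produces a graph isomorphism $\varphi:V(G_1)\to V(G_2)$, and moreover any such isomorphism satisfies $w_2\circ\varphi=w_1$.

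Next I would promote $\varphi$ to a DAG isomorphism. Given any directed edge $u\to v$ in $\vec{G}_1$, the opening observation gives $w_1(u)<w_1(v)$, and since $\varphi$ preserves weights, $w_2(\varphi(u))<w_2(\varphi(v))$. The undirected edge $\{\varphi(u),\varphi(v)\}$ lies in $G_2$ and comes from some directed edge of $\vec{G}_2$; applying the opening observation in $\vec{G}_2$ forces this edge to be oriented from the lower-weight endpoint to the higher-weight one, i.e.\ $\varphi(u)\to\varphi(v)$. Applying the same argument to $\varphi^{-1}$ gives the converse, so $\varphi$ is a DAG isomorphism and $\vec{G}_1\cong\vec{G}_2$. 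Part (2) is identical, substituting Theorem \ref{main_thm}(2).

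The main obstacle, mild but genuine, is verifying the strict separation $w(u)\neq w(v)$ for every edge of $G$; this is what allows the weights alone to reconstruct the edge orientations, and it hinges on both the ``$+1$'' in the definition of $w$ and on the acyclicity of $\vec{G}$ (so the max in the recursion is taken over a finite bounded set). Once strict separation is in hand, the orientation of every edge is forced by $w$, and the reduction to the vertex-weighted case is purely formal.
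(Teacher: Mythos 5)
Your proposal is correct, and at the top level it follows the same strategy as the paper: show that the vertex-weighted graph $(G,w)$ determines the DAG $\vec{G}$ up to isomorphism, then invoke Theorem \ref{main_thm}. The difference lies in how the reconstruction is carried out, and here your version is actually the more careful one. The paper recovers the arcs via the set $A=\{\{u,v\}\in\binom{V(G)}{2}\mid w(u)+1=w(v)\}$, which as literally written is problematic: it ranges over all vertex pairs rather than only the edges of $G$ (so two non-adjacent vertices of consecutive weights would produce a spurious arc), and it misses arcs $u\to v$ for which $u$ does not attain the maximum in the recursion, e.g.\ the arc $a\to c$ in the DAG with arcs $a\to b$, $b\to c$, $a\to c$, where $w(a)=1$ and $w(c)=3$. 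Your reconstruction instead orients each \emph{existing} edge of $G$ from the endpoint of strictly smaller weight to the one of strictly larger weight, justified by the observation that $w(v)\ge 1+w(u)$ for every arc $u\to v$; this avoids both defects and is the correct repair of the paper's argument. You also handle well-definedness more cleanly, via the intrinsic characterization of $w(v)-1$ as the longest directed path ending at $v$, where the paper uses a descent-to-a-source contradiction argument. In short: same reduction, but your key lemma (strict weight separation along every edge of $G$) is the right one, and the paper's stated arc-recovery formula should be read as an imprecise shorthand for what you wrote.
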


\begin{proof}
We demonstrate that the weight assignment is both well-defined and injective. Let $\vec{G}_1$ and $\vec{G}_2$ be DAGs that are isomorphic as directed graphs. Let $\varphi: \vec{G}_1 \to \vec{G}_2$ be the isomorphism. Denote the weights assigned to the undirected versions of $\vec{G}_1$ and $\vec{G}_2$, denoted by $G_1$ and $G_2$ respectively, as $w_1$ and $w_2$. Assume there exists a vertex $v_0 \in V(\vec{G}_1)$ such that $w_1(v_0) \neq w_2(\varphi(v_0))$. Then, it must hold that $\underset{v_1: v_1 \to v_0}{\max} w_1(v_1) \neq \underset{v_1': v_1' \to \varphi(v_0)}{\max} w_2(v_1')$, and for at least one vertex $v_1$ (where there is an edge $v_1 \to v_0$), $w_1(v_1) \neq w_2(\varphi(v_1))$ holds. Repeating this process, we eventually arrive at a vertex with in-degree 0 that does not have a weight of 1, leading to a contradiction. Next, given any vertex-weighted graph $(G, w)$ obtained from a weight assignment on a DAG, the original DAG can be reconstructed as follows: The vertex set remains $V(G)$, and the arc set $A$ is defined by
\[
A := \{\{u, v\} \in \binom{V(G)}{2} \mid w(u) + 1 = w(v)\}.
\]
Thus, the weight assignments uniquely determine the arcs, ensuring that non-isomorphic DAGs have distinct weight assignments. Therefore, from Theorem \ref{main_thm}, $Y_H(\bullet)$ is a complete invariant.

\end{proof}

Let $P$ be a poset and let $\vec{G}_P$ denote the directed graph representing the Hasse diagram of $P$. Let $G_P$ be the undirected graph obtained by ignoring the directions in $\vec{G}_P$. Assign a weight $w_P$ to $G_P$ as described in the previous theorem. Then,
\[
Z_H(P, x) := Y_H(\vec{G}_P, x) = X_H((G_P, w_P), x)
\]
is uniquely determined by the structure of the poset and the weight assignment as described, and the following two statements hold.

\begin{cor}
(1) Let $H$ be a universal graph. Then,
\[
Z_H(\bullet)
\]
is a complete invariant for finite posets.

(2) Let $H = \{H_n\}_{n\in\mathbb{N}}$ be an infinite sequence of universal graphs. Then,
\[
\{Z_{H_n}(\bullet)\}_{n \in \mathbb{N}}
\]
is a complete invariant for finite posets.
\end{cor}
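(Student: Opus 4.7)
The plan is to reduce the corollary to the preceding theorem on DAGs via the Hasse-diagram correspondence. If $Z_H(P_1)=Z_H(P_2)$, then by the very definition $Z_H(P)=Y_H(\vec{G}_P)$ I immediately obtain $Y_H(\vec{G}_{P_1})=Y_H(\vec{G}_{P_2})$, and the preceding theorem on DAGs (part (1) for a single universal graph, part (2) for a series, applied coordinate-wise) delivers an isomorphism $\vec{G}_{P_1}\cong \vec{G}_{P_2}$ of directed acyclic graphs. Thus the only step remaining is to upgrade such a Hasse-diagram isomorphism to an isomorphism of the underlying posets.

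For this I would invoke the standard structural fact that a finite poset is uniquely recovered from its Hasse diagram as the reflexive transitive closure of the covering relation. Concretely, given a directed-graph isomorphism $\varphi\colon V(\vec{G}_{P_1})\to V(\vec{G}_{P_2})$, the map $\varphi$ carries arcs of $\vec{G}_{P_1}$, which by construction are precisely the covers of $P_1$, bijectively to the covers of $P_2$; hence $\varphi$ preserves the transitive closure of the cover relation in both directions and is therefore an order isomorphism $P_1\cong P_2$. This proves part (1), and part (2) follows from the same argument applied to the sequence $\{H_n\}$ together with the series version of the DAG theorem.

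I do not foresee a real obstacle. The one point I would write out carefully is the well-definedness of the composite assignment $P\mapsto \vec{G}_P\mapsto (G_P,w_P)\mapsto X_H((G_P,w_P),x)$: each step must respect isomorphisms, the last two by the previous theorem and its proof, and the first by the classical fact that isomorphic posets have isomorphic Hasse diagrams. The whole argument is then a clean composition of three isomorphism-invariance statements, requiring no technical input beyond the DAG theorem and the standard poset/Hasse-diagram correspondence.
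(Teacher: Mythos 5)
Your proposal is correct and follows the same route as the paper, which states the corollary without a separate proof, treating it as an immediate consequence of the DAG theorem together with the standard fact that a finite poset and its Hasse diagram determine each other up to isomorphism. Your explicit verification that a directed-graph isomorphism of Hasse diagrams induces an order isomorphism via the transitive closure of the cover relation is exactly the detail the paper leaves implicit.
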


\begin{rem}
    $Z_H(\bullet)$ can distinguish hyperplane arrangements that are distinguishable by their intersection posets.
\end{rem}

\begin{prob}
(1) Can we define elementary $k$-fold symmetric functions and Schur $k$-fold symmetric functions? If so, when is $X_{K_{\NN,k}}(G,w)$ $e$-positive or $s$-positive?\\
(2) If $H$ is not a complete graph, the deletion-contraction relation: 
\[
X_H(G,w) = X_H(G\backslash e,w) - X_H(G/e,w/e)
\]  
does not generally hold. For instance, can a similar identity be derived by introducing a correction term?
\end{prob}

\section*{Acknowledgments}
The author thanks Professor Tsuyoshi Miezaki for his insightful lectures on graph theory, including discussions on the chromatic symmetric function. The author also expresses gratitude to Master's students Naoki Fujii, Yusaku Nishimura, and Ryosuke Yamaguchi for their valuable discussions.

\section*{Data Availability}
Data sharing is not applicable to this paper as no data sets were generated or analyzed during the current study.

\section*{Declarations}
\subsection*{Conflict of interest}
The author declares that they have no conflict of interest.

%%%%%%%%%%%%%%%%%%%%%%%%%%%%%%%%%%%%%%%%%%%%%%%%%%%%%%%%%%%%%%%%%%%%%%%%%%%%%%%%

\end{document}